\newcommand{\R}{\mathbb R}
\newcommand{\C}{\mathbb C}
\renewcommand{\Re}{\mathop {\rm Re}\nolimits}
\renewcommand{\Im}{\mathop {\rm Im}\nolimits}
\newtheorem{theorem}{Theorem}
 \newtheorem{lemma}[theorem]{Lemma}
\begin{document}

\title{Numerical Ranges and Spectral Sets: the unbounded case.}

\author{Michel Crouzeix}

\maketitle

\begin{abstract}
It is known that, if $\Omega\subset \C$ is a convex set containing the numerical range of an operator $A$, then $\Omega$ is a $C_\Omega$-spectral set for $A$ with $C_\Omega\leq1{+}\sqrt2$.
We improve this estimate in unbounded cases. \end{abstract}

\paragraph{2000 Mathematical subject classifications\,:}47A25 ; 47A30

\noindent{\bf Keywords\,:}{ numerical range, spectral set}

\section{Introduction}
We consider an unbounded convex set $\Omega\neq \C$ of the complex plane and we denote by $2\alpha$ its  aperture angle, i.e. the maximal angle of sectors contained in $\Omega$.
If $A$ is a (possibly unbounded) linear operator on a complex Hilbert space $H$ with numerical range $W(A)\subset\Omega$, we show that it holds
\begin{equation}\label{eq1}
\|f(A)\|\leq K(\alpha) \sup_{z\in\Omega}|f(z)|,\quad\textrm{with }K(\alpha)=1-\frac\alpha\pi +\sqrt{2-\frac{4\alpha}\pi +\frac{\alpha^2}{\pi ^2}},
\end{equation}
for all rational functions $f$ bounded in $\Omega$, in other words, $\Omega$ is a $K(\alpha)$-spectral set for $A$. Note that, if $\alpha$ is positive, then $K(\alpha)<K(0)=1{+}\sqrt2$ and this new estimate improves the result in \cite{crpa}. For $\alpha=\pi /2$, $\Omega$ is a half-plane, $K(\alpha)=1$, we re-obtain the von Neumann inequality for the half-plane.
\medskip

\noindent\textbf{Remark.} {\it It suffices to prove the estimate \eqref{eq1} when $\Omega$ is a smooth open convex containing the closure of $W(A)$. Furthermore, using a translation and a rotation if needed, we can assume that }
\[
\{z=\rho\, e^{i\theta }: \rho > 0, |\theta |<\alpha\}\subset\Omega\subset \{z\in \C\,: \Re z>0\}.
\]

Note that, in view of the Mergelyan theorem, the estimate \eqref{eq1} is valid, not only for rational functions $f$, but also for any $f$ belonging to the algebra 
$\mathcal{A}(\Omega):=\{f\,; f$ holomorphic in $\Omega$, continuous in $\overline\Omega$ and $f$ admits a limit at $\infty\}$.
Furthermore, using a sequence of smooth convex domains
 $\Omega_n\supset \overline{W(A)}$ converging to $\overline{W(A)}$, we easily get
 \[
 \|f(A)\|\leq K(\alpha)\sup_{z\in W(A)}|f(z)|,
 \]
which shows that the numerical range $W(A)$ is a $K(\alpha)$-spectral set for the operator $A$.
\medskip

For the sake of simplicity, we work in this paper with complex-valued functions, but there would be no difficulty to generalize the following proofs to matrix-valued $f$, without changing the constants. Therefore\,\cite{paul}, the homomorphism $f\mapsto f(A)$
from the algebra ${\mathcal A}(W(A))$ into $B(H)$
is completely bounded by $K(\alpha)$.
In other words, the numerical range $W(A)$ is a complete $K(\alpha)$-spectral set for the operator $A$.

\section{Proof in the case of bounded operators}
In the following, the algebra $\mathcal{A}(\Omega)$ is provided with the norm $\|g\|_{\infty}=\max \{|g(z)|\,; z\in\overline\Omega\}$, the Hilbert space $H$ is equipped with the inner product $\langle \cdot,\cdot\rangle$ and with the associated norm $\|\cdot\|$. We use the same notation $\|\cdot\|$ for the operator norm in $B(H)$.\bigskip

We suppose
\[
\{z=\rho\, e^{i\theta }: \rho > 0, |\theta |<\alpha\}\subset\Omega\subset \{z\in \C\,: \Re z>0\}.
\]
We denote $C_\Omega$ the best constant such that
\[
\|f(A)\|\leq C_\Omega \|f\|_\infty, \quad \textrm{for all } f\in \C(z) \textrm{ with } f(\infty)=0,\textrm{  and for all }A\in B(H)) \textrm{ with }  \overline{W(A)}\subset \Omega.
\]
Notice that this estimate is also valid if $f$ is a rational function with $f(\infty)\neq 0$\,; indeed, in this case we set $f_{\varepsilon }(z)=f(z)/(1+\varepsilon z)$ with $\varepsilon >0$ and note that
\[
\|f_\epsilon (A)\|\leq C_\Omega  \|f_{\varepsilon}\|_\infty\leq C_\Omega  \|f\|_\infty,
\]
since  $f_\varepsilon(\infty)=0$ and $|1/(1{+}\varepsilon z)|\leq 1$ in $\Omega$. 
Using the fact that $\lim_{\varepsilon \to 0}f_\varepsilon (A)=f(A)$, we get $\|f(A)\|\leq C_\Omega \|f\|_\infty$.
In view of the Mergelyan theorem, this estimate is still valid for $f\in \mathcal{A}(\Omega)$.
More generally, if $f$ is holomorphic and bounded in $\Omega$, continuous in $\overline\Omega$
(but possibly not continuous at infinity), $f(A)$ is defined since $A$ is bounded\,; we still have $f_\varepsilon (A)\to f(A)$ whence $\|f(A)\|\leq C_\Omega \|f\|_\infty$.
\medskip

From now on, we work with rational functions $f$ vanishing at $\infty$.
The boundary $\partial \Omega$ is assumed smooth and counterclockwise oriented with an arclength $s$; let be $\sigma =\sigma (s)$ the corresponding point  of the boundary. We will use the notations
\begin{align*}
\mu (\sigma ,z)&:=\frac{1}{\pi}\frac{d\arg(\sigma(s){ -}z)}{ds }=\frac{1}{2\pi i}\Big(\frac{\sigma '(s)}{\sigma(s) -z }-\frac{\overline{\sigma '(s)}}{\overline{\sigma(s)} -\bar z }\Big),\\
\mu (\sigma ,A)&:=\frac{1}{2\pi i}\big(\sigma '(s)(\sigma {-}A)^{-1}+\overline{\sigma '(s)}(\overline{\sigma} {-}A^*)^{-1}\big).
\end{align*}
Recall that $\Omega$ convex implies $\mu (\sigma ,z)>0$ for $z\in \Omega$ and $\overline{W(A)}\subset \Omega$ implies $\mu (\sigma ,A)>0$, $
\forall \sigma \in\partial \Omega$.\medskip

Since $f(\infty)=0$, we may write the Cauchy formulae (for $z\in\Omega$)
\[
f(z)=\frac1{2\pi i}\int_{\partial \Omega}f(\sigma)\frac{d\sigma }{\sigma -z}, \quad f(A)=\frac1{2\pi i}\int_{\partial \Omega}f(\sigma)(\sigma I{-}A)^{-1}d\sigma .
\]
We will also introduce the Cauchy transforms of the complex conjugate of $f$
\begin{equation}\label{eq2}
g(z):=C(\overline{f},z):=\frac1{2\pi i}\int_{\partial \Omega}\overline{f(\sigma)}\frac{d\sigma }{\sigma -z}, \quad g(A):=\frac1{2\pi i}\int_{\partial \Omega}\overline{f(\sigma)}(\sigma I{-}A)^{-1}d\sigma,
\end{equation}
and finally the transforms of $f$ by the kernel $\mu(\cdot,\cdot) $
\begin{equation}\label{eq3}
S(f,z):=\int_{-\infty}^\infty f(\sigma(s))\mu (\sigma(s),z)\,ds ,\quad S=S(f,A):=\int_{-\infty}^\infty f(\sigma (s))\mu (\sigma (s),A)\,ds.
\end{equation}
From these definitions, it is clear that (for $z\in\Omega$)
\begin{equation}\label{eq3}
f(z)+\overline{g(z)}=S(f,z)\quad\text{and}\quad S^*=f(A)^*+g(A).
\end{equation}

\begin{lemma}
 Assume that the rational function $f$ satisfies $\|f\|_{\infty}\leq 1$ and $f(\infty)=0$. Then, 
 $g=C(\bar f,\cdot)$ satisfies  $g$ is holomorphic in $\Omega$, continuous in $\overline\Omega$ and $\|g\|_{\infty}\leq1-\frac{2\alpha}\pi $.
\end{lemma}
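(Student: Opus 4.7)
My strategy is to bound $|g|$ on $\partial\Omega$ and conclude by the maximum principle.

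First I would establish the basic regularity of $g$: holomorphy in $\Omega$ is immediate from differentiating under the Cauchy integral; continuity on $\overline\Omega$ follows from the Plemelj-Privalov theorem applied to the Lipschitz density $\bar f$ on the smooth curve $\partial\Omega$; and $g(z)\to 0$ as $z\to\infty$, since $|f(\sigma)|=O(|\sigma|^{-1})$ on $\partial\Omega$ makes the Cauchy kernel decay faster than it grows.

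The crucial step is to compute the boundary values of $g$ using the identity $\overline{g(z)}=S(f,z)-f(z)$ from \eqref{eq3}. To analyze the boundary limit of $S$, I first compute the total mass of the positive kernel $\mu(\cdot,z)\,ds$. Closing $\partial\Omega$ with an arc at infinity that subtends the asymptotic aperture $2\alpha$ and applying the argument principle gives
\[
M:=\int_{\partial\Omega}\mu(\sigma,z)\,ds=\frac{2\pi-2\alpha}{\pi}=2-\frac{2\alpha}{\pi}\quad (z\in\Omega),
\]
while the analogous computation for $z\notin\overline\Omega$ yields $-2\alpha/\pi$. For the boundary value at $\sigma_0\in\partial\Omega$, the continuous variation of $\arg(\sigma-\sigma_0)$ along $\partial\Omega$ (discarding the $\pi$ jump at $\sigma=\sigma_0$) is $\pi-2\alpha$, so $\int_{\partial\Omega}\mu(\sigma,\sigma_0)\,ds=1-\frac{2\alpha}{\pi}$.

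I then decompose
\[
\overline{g(z)}=\int_{\partial\Omega}\bigl(f(\sigma){-}f(z)\bigr)\mu(\sigma,z)\,ds+f(z)(M{-}1),
\]
and let $z\to\sigma_0$ normally from inside $\Omega$. The factor $f(\sigma)-f(z)$, being Lipschitz and vanishing at $\sigma=z$, tames the local $1/|\sigma-z|$ singularity of $\mu(\cdot,z)$, and the $O(1/|\sigma|^{2})$ decay of $\mu$ at infinity makes the integrand globally dominated; dominated convergence then sends the integral to $\int(f(\sigma){-}f(\sigma_0))\mu(\sigma,\sigma_0)\,ds$. Combined with $M-1=1-2\alpha/\pi$, this collapses to
\[
\overline{g(\sigma_0)}=\int_{\partial\Omega}f(\sigma)\,\mu(\sigma,\sigma_0)\,ds.
\]
Positivity of $\mu(\cdot,\sigma_0)$ and $\|f\|_\infty\leq 1$ give $|g(\sigma_0)|\leq 1-2\alpha/\pi$, and the maximum principle (using $g(\infty)=0$, i.e.\ the maximum principle on the one-point compactification of $\overline\Omega$) extends the bound to all of $\overline\Omega$.

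The main obstacle is the identity $\int_{\partial\Omega}\mu(\sigma,\sigma_0)\,ds=1-2\alpha/\pi$ at a smooth boundary point, equivalently the statement that as $z\to\sigma_0$ the measure $\mu(\cdot,z)\,ds$ loses a Dirac mass of exactly $1$ at $\sigma_0$. This reflects the half-plane-like local structure of $\partial\Omega$ at a smooth point and can be justified either by a tangent half-plane approximation (where the mass-$1$ concentration is that of the ordinary half-plane Poisson kernel) or by reading off the appropriate $\pi$ jump in $\arg(\sigma-\sigma_0)$ as $\sigma$ crosses $\sigma_0$.
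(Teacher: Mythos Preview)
Your approach is essentially the paper's: relate $g$ to $S(f,\cdot)-f$, identify the boundary value as $\overline{g(\sigma_0)}=\int_{\partial\Omega} f(\sigma)\,\mu(\sigma,\sigma_0)\,ds$, bound it by the kernel mass $1-2\alpha/\pi$, and invoke the maximum principle. The only difference is in the continuity step: where you cite Plemelj--Privalov (which needs care on an unbounded curve) and a dominated-convergence limit relying on $\mu=O(|\sigma|^{-2})$ at infinity, the paper devotes most of its proof to a self-contained argument via truncation to compact arcs with tail control through the total-variation-of-argument formula, which sidesteps both of those issues.
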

\begin{proof} Clearly, $g$ is holomorphic in $\Omega$ and it holds
\[
g(z)=\int_{-\infty}^\infty\overline{f(\sigma(s) )}\,\mu(\sigma(s),z)\,ds-\overline{f(z)},\quad\forall z\in\Omega.
\]
We extend $g$ on the boundary by setting
\[
g(\sigma_0)=\int_{-\infty}^\infty\overline{f(\sigma(s) )}\,\mu(\sigma(s),\sigma _0)\,ds,\quad\textrm{for }\sigma _0=\sigma (s_0)\in\partial\Omega.
\]
\textit{a) The restriction of $g$ onto $\partial \Omega$ is continuous.} We introduce for $m>|s_0|{+}1$
\[
g_m(\sigma_0)=\int_{-m}^m\overline{f(\sigma(s) )}\,\mu(\sigma(s),\sigma _0)\,ds.
\]
Note that we can write
\[
g_m(\sigma_0)=\int_{-m}^m\big(\overline{f(\sigma){-f(\sigma _0}}\big)\,\mu(\sigma,\sigma _0)\,ds+\overline{f(\sigma _0)}\big(\arg(\sigma (m){-}\sigma_0)-\arg(\sigma(-m){-}\sigma_0)\big)/\pi,
\]
since $\int_{-m}^m\mu(\sigma,\sigma _0)\,ds=\big(\arg(\sigma (m){-}\sigma_0)-\arg(\sigma(-m){-}\sigma_0)\big)/\pi $.
Clearly, in order to show the continuity of $g_m$ with respect to $\sigma _0$ it suffices to show the continuity of the integral part.
Note that $|f(\sigma (s)){-}f(\sigma_0)|\leq \|f'\|_{\infty}|\sigma (s){-}\sigma_0|$ and
\[
|\mu (\sigma (s),\sigma_0)|=\frac1\pi \Big|\Im\frac {\sigma '(s)}{\sigma (s){-}\sigma_0}\Big|\leq \frac{1}{\pi | \sigma (s){-}\sigma_0|}.
\]
The integrand being continuous with respect to $\sigma _0$ for $s\neq s_0$ and bounded by $\|f'\|_\infty/\pi $, the continuity of $g_m$ follows from the dominated convergence theorem.
Now, we observe that 
\[
|g(\sigma _0)-g_m(\sigma _0)|\leq 2\int_{\R\setminus(-m,m)}\mu(\sigma,\sigma _0)\,ds
=\big(2\alpha{-}\arg(\sigma (m){-}\sigma _0){+}\arg(\sigma (-m){-}\sigma _0)\big)/\pi .
\]
This shows that, for $\sigma _0$ belonging to a compact set of $\partial \Omega$, $g_m$ uniformly converges to $g$ as $m\to \infty$. Therefore $g$ is continuous on $\partial \Omega$.\bigskip

\textit{b) Continuity of $g$ in  $\overline \Omega$.}
It suffices to show that, if $z_n\to \sigma _0\in \partial \Omega$ with $z_n\in\Omega$, then $g(z_n)\to g(\sigma _0)$. For that we associate to each $z_n$ a point $\sigma _n\in \partial \Omega$ such that
\[
|z_n{-}\sigma _n|=\min\{|z_n{-}\sigma |\,: \sigma \in \partial \Omega\}.
\]
Clearly, it holds $\sigma_n\to \sigma _0$ whence, from part a), it suffices to show that $g(z_n){-}g(\sigma _n)\to0$. Using the fact that $\int_{-\infty}^\infty\mu (\sigma(s) ,z)\,ds=2{-}2\alpha/\pi $ if $z\in \Omega$ and $\int_{-\infty}^\infty\mu (\sigma(s) ,z)\,ds=1{-}2\alpha/\pi $ if $z\in \partial \Omega$, we can write
\begin{align*}
g(z_n)-g(\sigma_n)&=\int_{-\infty}^\infty\overline{f(\sigma (s))}\ \mu (\sigma (s),z_n)\,ds-\overline{f(z_n)}-g(\sigma_n)\\
&=\int_{-\infty}^\infty
\big(\overline{f(\sigma (s)){-}f(\sigma _n)}\big)\ \mu (\sigma (s),z_n)\,ds+(2{-}\tfrac{2\alpha}{\pi })\overline{f(\sigma _n)}-\overline{f(z_n)}-g(\sigma_n)\\
&=\int_{-\infty}^\infty\big(\overline{f(\sigma (s)){-}f(\sigma _n)}\big)\ (\mu (\sigma (s),z_n){-}\mu (\sigma (s),\sigma _n))\,ds+\overline{f(\sigma _n)}-\overline{f(z_n)}\\
&=\int_{-m}^m\big(\overline{f(\sigma (s)){-}f(\sigma _n)}\big)\ (\mu (\sigma (s),z_n){-}\mu (\sigma (s),\sigma _n))\,ds+\overline{f(\sigma _n)}-\overline{f(z_n)}\\
& \ +\int_{\R\setminus(-m,m)}\big(\overline{f(\sigma (s)){-}f(\sigma _n)}\big)\ \mu (\sigma (s),z_n)\,ds
.
\end{align*}
The last integral is bounded by $\big(2\alpha{-}\arg(\sigma (m){-}z_n){+}\arg(\sigma (-m){-}z_n)\big)/\pi $. Let $\varepsilon >0$ be given. We can choose $m$ such that 
$\big(2\alpha{-}\arg(\sigma (m){-}z){+}\arg(\sigma (-m){-}z)\big)/\pi\leq \varepsilon$ for all $z$ such that $|z{-}\sigma _0|\leq 1$, and we can choose $\eta>0$ such that $|\sigma _0{-}z_n|\leq \eta$ implies 
$|f(\sigma _n){-}f(z_n)|\leq \varepsilon $. Now, we observe that the previous integrand satisfies
\[
\Big|\big(\overline{f(\sigma (s)){-}f(\sigma _n)}\big)\ (\mu (\sigma (s),z_n){-}\mu (\sigma (s),\sigma _n))\Big|\leq \|f'\|_\infty\,|\sigma (s){-}\sigma _n|\frac{|z_n-\sigma _n|}{\pi |\sigma (s){-}z_n|\,|\sigma (s){-}\sigma _n|}\leq \frac{\|f'\|_\infty}{\pi }. 
\]
Hence the corresponding integral tends to zero by the dominated convergence theorem.
We deduce $\lim_{n\to\infty}|g(z_n){-}g(\sigma _n)|\leq 2\varepsilon $ for all $\varepsilon >0$, whence
$\lim_{n\to\infty}g(z_n)=g(\sigma _0)$.\bigskip

\textit{c) Proof of the bound.} From the value at a point $\sigma _0$ on the boundary
\[
g(\sigma_0)=\int_{-\infty}^\infty\overline{f(\sigma(s) )}\,\mu(\sigma(s),\sigma _0)\,ds
\]
we deduce $|g(\sigma _0)|\leq \int_{-\infty}^\infty\mu(\sigma(s),\sigma _0)\,ds=1{-}2\alpha/\pi $ on the boundary
and thus in the interior by the maximum principle.
\end{proof}

\begin{lemma}
 Assume that the rational function $f$ satisfies $\|f\|_{\infty}\leq 1$ and $f(\infty)=0$ and that the bounded operator $A$ satisfies $\overline{W(A)}\subset \Omega$. Then, $\|S(f,A)\|\leq 2{-}2\alpha/\pi $.
\end{lemma}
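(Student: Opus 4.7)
The argument rests on two ingredients already present in the paper: the positivity of the self-adjoint operator $\mu(\sigma(s),A)$, and the natural operator analogue of the scalar mass formula $\int\mu(\sigma(s),z)\,ds=2-2\alpha/\pi$ used in Lemma~1. My plan is first to establish the identity
\[
T:=\int_{-\infty}^\infty\mu(\sigma(s),A)\,ds=(2-2\alpha/\pi)\,I,
\]
and then to deduce $\|S(f,A)\|\le 2-2\alpha/\pi$ by a two-vector Cauchy--Schwarz estimate based on the positivity of $\mu(\sigma,A)$.

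To compute $T$, I write $T=(J-J^*)/(2\pi i)$ with $J:=\int_{\partial\Omega}(\sigma-A)^{-1}\,d\sigma$, and close the truncated contour $\Gamma_R:=\{\sigma(s):|s|\le R\}$ by a circular arc $C_R$ lying \emph{inside} $\Omega$ that joins $\sigma(R)$ to $\sigma(-R)$. Since the two branches of $\partial\Omega$ are asymptotic to the rays of argument $\pm\alpha$, for $R$ large one may take $C_R$ of radius $\sim R$ sweeping through polar angles $[-\alpha,\alpha]$, of total angular extent exactly the aperture $2\alpha$. The closed loop $\Gamma_R\cup C_R$ encloses $\mathrm{spec}(A)\subset\overline{W(A)}\subset\Omega$ counterclockwise once, so the Dunford calculus applied to the constant function $1$ gives $\frac{1}{2\pi i}\oint(\sigma-A)^{-1}d\sigma=I$. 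On $C_R$, the asymptotic $(\sigma-A)^{-1}=\sigma^{-1}+O(|\sigma|^{-2})$ shows $\int_{C_R}(\sigma-A)^{-1}d\sigma\to 2i\alpha\,I$ as $R\to\infty$; subtraction yields $J=i(2\pi-2\alpha)I$ and therefore $T=(2-2\alpha/\pi)I$.

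With $T$ identified, I use that the positivity $\mu(\sigma,A)\ge 0$ gives the pointwise bound $|\langle\mu(\sigma,A)u,v\rangle|\le\langle\mu(\sigma,A)u,u\rangle^{1/2}\langle\mu(\sigma,A)v,v\rangle^{1/2}$ for every $u,v\in H$. Combined with $|f|\le 1$ and the Cauchy--Schwarz inequality for the integral in $s$, this produces
\[
|\langle S(f,A)u,v\rangle|\le\int\langle\mu u,u\rangle^{1/2}\langle\mu v,v\rangle^{1/2}\,ds\le\langle Tu,u\rangle^{1/2}\langle Tv,v\rangle^{1/2}=(2-2\alpha/\pi)\,\|u\|\,\|v\|,
\]
and the lemma follows by taking the supremum over unit vectors $u,v$.

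The main obstacle is the evaluation of $T$: the closing arc must be drawn inside $\Omega$ (because it is $\mathrm{spec}(A)$, not its complement, which must be enclosed), the angular contribution of $C_R$ must be shown to equal exactly $2i\alpha\,I$ in the limit, and one must verify that the remainder in the resolvent asymptotic contributes negligibly. Once this geometric-analytic step is settled, the rest of the argument reduces to the routine double Cauchy--Schwarz above.
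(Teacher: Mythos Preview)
Your proposal is correct and follows essentially the same route as the paper: the double Cauchy--Schwarz estimate based on the positivity of $\mu(\sigma,A)$ is exactly the paper's argument. The only difference is that you supply an explicit contour-closure/Dunford-calculus justification for $\int_{-\infty}^{\infty}\mu(\sigma(s),A)\,ds=(2-2\alpha/\pi)I$, whereas the paper simply cites the scalar identity $\int_{-\infty}^{\infty}\mu(\sigma(s),z)\,ds=2-2\alpha/\pi$ (already used in Lemma~1) and passes to the operator version without further comment.
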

\begin{proof}
Note that $\int_{-\infty}^\infty\mu (\sigma(s) ,z)\,ds =2{-}\frac{2\alpha}\pi$. Using the fact that 
 $\mu (\sigma ,A)$ is positive definite and self-adjoint, we see that
 \begin{align*}
 |\langle Su,v\rangle|&=\Big|\int_{\partial \Omega}f(\sigma )\langle\mu (\sigma ,A)u,v\rangle\,ds\Big|
 \leq \int_{\partial \Omega}|\langle\mu (\sigma ,A)u,v\rangle|\,ds\\
 &\leq  \int_{\partial \Omega}\langle\mu (\sigma ,A)u,u\rangle^{1/2}\langle\mu (\sigma ,A)v,v\rangle^{1/2}\,ds\\
 &\leq  \Big(\int_{\partial \Omega}\langle\mu (\sigma ,A)u,u\rangle\,ds\Big)^{1/2}
  \Big(\int_{\partial \Omega}\langle\mu (\sigma ,A)v,v\rangle\,ds\Big)^{1/2}\\
 &\leq  \Big\langle\int_{\partial \Omega}\mu (\sigma ,A)\,ds\  u,u\Big\rangle^{1/2}
 \Big\langle\int_{\partial \Omega}\mu (\sigma ,A)\,ds\ v,v\Big\rangle^{1/2}=\big(2{-}\tfrac{2\alpha}\pi\big)\,\|u\|\,\|v\|.
 \end{align*}
\end{proof}

\begin{theorem}
$C_\Omega\leq 1-\frac\alpha\pi +\sqrt{2-\frac{4\alpha}\pi +\frac{\alpha^2}{\pi ^2}}$.
\end{theorem}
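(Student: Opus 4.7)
The target bound $K(\alpha) = 1 - \alpha/\pi + \sqrt{2 - 4\alpha/\pi + \alpha^2/\pi^2}$ is precisely the positive root of the quadratic $x^2 = b\, x + a$, with
\[
a := 1 - \tfrac{2\alpha}{\pi} \text{ (the Lemma 1 bound)}, \qquad b := 2 - \tfrac{2\alpha}{\pi} \text{ (the Lemma 2 bound)}.
\]
Indeed, $b^2 + 4a = 4(2 - 4\alpha/\pi + \alpha^2/\pi^2)$, so $(b + \sqrt{b^2+4a})/2 = K(\alpha)$. The plan is therefore to establish the quadratic inequality $C_\Omega^2 \leq b\, C_\Omega + a$ and solve it.

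Fix a rational $f$ with $\|f\|_\infty \leq 1$ and $f(\infty) = 0$, write $M := \|f(A)\|$, let $u$ be a unit vector and $v := f(A) u$. From the operator identity $S = f(A) + g(A)^*$ (the adjoint of the relation just before Lemma 1),
\[
\|v\|^2 = \langle f(A) u, v\rangle = \langle Su, v\rangle - \langle u, g(A) v\rangle.
\]
The first term is controlled by Cauchy--Schwarz and Lemma 2: $|\langle Su, v\rangle| \leq \|S\|\,\|v\| \leq b\|v\|$.

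For the second term I would iterate the same decomposition on $g$: write $g(A) v = S(g, A) v - g_2(A)^* v$ with $g_2 := C(\bar g, \cdot)$. Lemma 2 applied to $g$ yields $\|S(g, A)\| \leq b\|g\|_\infty \leq ab$, and Lemma 1 applied to $g/\|g\|_\infty$ gives $\|g_2\|_\infty \leq a\|g\|_\infty \leq a^2$. Unrolling this recursion produces a telescoping series whose weights decay geometrically in powers of $a$; combined with the structural identity above, the aim is to arrive at $\|v\|^2 \leq b\|v\| + a$. Taking the supremum over unit $u$ and then over admissible $f$ then gives $C_\Omega^2 \leq bC_\Omega + a$, and solving yields $C_\Omega \leq K(\alpha)$.

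The main obstacle will be to arrange the recursion so that the $\|g\|_\infty \leq a$ bound contributes as the \emph{additive} constant $a$ rather than as $C_\Omega\,a$. The naive estimate $|\langle u, g(A) v\rangle| \leq \|g(A)\|\,\|v\| \leq C_\Omega\,a\,\|v\|$ produces only the linear inequality $C_\Omega \leq b/(1-a)$, which is not sharp and diverges as $\alpha \to 0^+$, thereby missing the Crouzeix--Palencia root $1+\sqrt{2}$. Similarly, iterating the decomposition but bounding the tail by $\|g_k(A)\| \leq C_\Omega a^k$ at some finite stage only reproduces $C_\Omega \leq b/(1-a)$ after a geometric-series summation. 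The delicate point is therefore to carry the iteration in a way where each step reduces the remainder by a factor $a$ coming from Lemma 1 applied to a function in $\mathcal{A}(\Omega)$ of sup norm $\leq a^k$, \emph{without} reintroducing the constant $C_\Omega$ through an operator-norm estimate; this is the step where the proof must be more than a routine combination of Lemmas 1 and 2.
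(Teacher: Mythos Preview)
You have correctly identified that the target is the quadratic inequality $C_\Omega^2 \leq b\,C_\Omega + a$ with $a = 1 - 2\alpha/\pi$ and $b = 2 - 2\alpha/\pi$. But your proposal does not actually prove it: as you yourself say, both the naive bound $|\langle u,g(A)v\rangle|\le C_\Omega\, a\,\|v\|$ and your iterated decomposition give only $C_\Omega \leq b/(1-a)$, and the ``delicate point'' at the end is left open. So the argument is genuinely incomplete; the whole difficulty of the theorem lies precisely in the step you have flagged but not resolved.

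The missing idea, which the paper attributes to Schwenninger, is not to iterate the decomposition on $g$ but to exploit the \emph{multiplicativity} of the holomorphic functional calculus by passing to a higher power. From $f(A)^* = S^* - g(A)$ one writes
\[
(f(A)^*f(A))^2 \;=\; f(A)^*f(A)\,S^*f(A)\;-\;f(A)^*\bigl(f(A)g(A)f(A)\bigr).
\]
The crucial observation is $f(A)g(A)f(A) = (fgf)(A)$, and since $\|f\|_\infty \leq 1$ one has $\|fgf\|_\infty \leq \|g\|_\infty \leq a$, hence $\|(fgf)(A)\| \leq C_\Omega\,a$ by the very definition of $C_\Omega$. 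Bounding the two terms gives
\[
\|f(A)\|^4 \;\leq\; b\,C_\Omega^3 + a\,C_\Omega^2,
\]
and taking the supremum over admissible $f$ yields $C_\Omega^4 \leq b\,C_\Omega^3 + a\,C_\Omega^2$, i.e.\ $C_\Omega^2 \leq b\,C_\Omega + a$. The mechanism you were searching for is this: sandwiching $g(A)$ between two copies of $f(A)$ replaces an operator-norm estimate on $g(A)$ (cost $C_\Omega a$) by a sup-norm estimate on $fgf$ (cost $a$), with the extra factors of $C_\Omega$ cancelling once one divides through by $C_\Omega^2$. No infinite recursion is needed.
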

\begin{proof} We consider a rational function $f$ such that $\|f\|_{\infty}\leq 1$ and $f(\infty)=0$ and we assume that the bounded operator $A$ satisfies $\overline{W(A)}\subset \Omega$.
Following a suggestion by Felix Schwenninger~\!\footnote{Suggestion made during  a workshop on Crouzeix's Conjecture hosted by the American Institute of Mathematic}, we deduce from $S^*=S(f,A)^*=f(A)^*+g(A)$ that
 \[
( f(A)^*f(A))^2= f(A)^*f(A)\,S^*f(A)-f(A)^*f(A)g(A)f(A).
  \]
Using the fact that $f(A)g(A)f(A)=(fgf)(A)$ and $\|(fgf )(A)\|\leq C_\Omega\|fgf\|_\infty \leq C_\Omega\|g\|_\infty$,
we get
 \[
\|f(A)\|^4=\|( f(A)^*f(A))^2\|\leq(2{-}\tfrac{2\alpha}\pi )\,C_\Omega^3+(1{-}\tfrac{2\alpha}\pi )\,C_\Omega^2 .
  \]
 Whence, using that $C_\Omega$ is the supremum of $\|f(A)\|$, we get $C_\Omega^4\leq (2{-}\tfrac{2\alpha}\pi )\,C_\Omega^3+(1{-}\frac{2\alpha}\pi )C_\Omega^2$, which gives the estimate $C_\Omega\leq 1-\frac\alpha\pi +\sqrt{2-\frac{4\alpha}\pi +\frac{\alpha^2}{\pi ^2}}$.\end{proof}

\section{The case of unbounded operators}
We now consider a closed unbounded operator $A\in \mathcal L (D(A),H)$ with domain $D(A)$ densely contained in $H$. Then, its numerical range is
\[
W(A):=\{\langle Av,v\rangle\,: v\in D(A), \ \|v\|=1\}.
\]
We assume $\{z=\rho \,e^{i\theta}\,:\rho >0,\ |\theta |<\alpha\}\subset\Omega\subset\{z\in\C\,: \Re z>0\}$, Sp$(A)\subset W(A)$ and $\overline{W(A)}\subset\Omega$.
This ensures that $f(A)$ is well defined for all rational functions bounded in $\Omega$. 
With $\varepsilon >0$ we set $A_\varepsilon =A(I{+}\varepsilon A)^{-1}$\, then, clearly $A_\varepsilon \in \mathcal L(H)$. Noticing that 
\[
\langle A_{\varepsilon }v,v\rangle=\langle A(I{+}\varepsilon A)^{-1}v,(I{+}\varepsilon A)^{-1}v\rangle+\varepsilon \| A(I{+}\varepsilon A)^{-1}v\|^2\in \Omega+\varepsilon \| A(I{+}\varepsilon A)^{-1}v\|^2,
\]
we deduce that $\overline{W(A_\varepsilon )}\subset\Omega$.
Therefore, if $f$ is a rational function bounded in  $\Omega$,
\[
\|f(A_\varepsilon )\|\leq  C_\Omega\,\sup_{z\in \Omega}|f(z)|\quad\textrm{ whence }
\|f(A)\|\leq   C_\Omega\,\sup_{z\in \Omega}|f(z)|.
\]
since $f(A_\varepsilon)$  converges to $f(A)$.\medskip

\noindent\textit{Proof of the convergence of $f(A_\varepsilon)$ to $f(A)$}.

 Since $f$ is a rational function, it suffices to show this for $f(z)=(\alpha{-}z)^{-1}$, $\alpha\notin \overline{\Omega}$. A simple calculation yields
 \[
 f(A_\varepsilon )-f(A)=f(A_\varepsilon )(A-A_\varepsilon )f(A)=\varepsilon \,A_\varepsilon f(A_\varepsilon )\,Af(A).
 \]
But, it is easily seen that $\|f(A)\|=\|(\alpha I{-}A)^{-1}\|\leq \frac1{d(\alpha,\Omega)}$,
 $\|Af(A)\|=\|{-}I{+}\alpha f(A)\|\leq 1{+}\frac{|\alpha|}{d(\alpha,\Omega)}$,
and similarly
 $\|A_\varepsilon f(A_\varepsilon )\|\leq 1{+}\frac{|\alpha|}{d(\alpha,\Omega)}$.
 
Therefore $\lim_{\varepsilon \to 0}\|f(A_\varepsilon )-f(A)\|=0$.

\end{document}